\crefname{section}{Section}{Sections}
\crefname{subsection}{\S}{\S\S}
\theoremstyle{plain}
\newtheorem{lemma}{Lemma}[section]
\newtheorem{proposition}[lemma]{Proposition}
\newtheorem{corollary}[lemma]{Corollary}
\newtheorem{theorem}[lemma]{Theorem}
\theoremstyle{nonumberplain}
\theoremstyle{plain}
\newtheorem{definition}[lemma]{Definition}
\newtheorem{example}[lemma]{Example}
\newtheorem{remark}[lemma]{Remark}
\crefname{definition}{definition}{definitions}
\crefname{ex}{example}{examples}
\crefname{remark}{remark}{remarks}
\crefname{convention}{convention}{conventions}
\crefname{lemma}{lemma}{lemmas}
\crefname{proposition}{proposition}{propositions}
\crefname{corollary}{corollary}{corollaries}
\crefname{theorem}{theorem}{theorems}
\crefname{assumption}{assumption}{Assumptions}
\crefname{equation}{}{}
\theoremstyle{nonumberplain}
\newtheorem{proof}{Proof}
\newcommand\pf[1]{\newtheorem{#1}{Proof of \Cref{#1}}}
\newcommand\bC{{\mathbb C}}
\newcommand\bN{{\mathbb N}}
\newcommand\bT{{\mathbb T}}
\newcommand\bZ{{\mathbb Z}}
\newcommand\cA{{\mathcal A}}
\newcommand\cC{{\mathcal C}}
\newcommand\cM{{\mathcal M}}
\newcommand\cO{{\mathcal O}}
\title{Topological generation results for free unitary and orthogonal groups}
\author{Alexandru Chirvasitu}
\begin{document}

\date{}

\newcommand{\Addresses}{{
  \bigskip
  \footnotesize

  \textsc{Department of Mathematics, University at Buffalo, Buffalo,
    NY 14260-2900, USA}\par\nopagebreak \textit{E-mail address}:
  \texttt{achirvas@buffalo.edu}

}}

\maketitle

\begin{abstract}
  We show that for every $N\ge 3$ the free unitary group $U^+_N$ is topologically generated by its classical counterpart $U_N$ and the lower-rank $U^+_{N-1}$. This allows for a uniform inductive proof that a number of finiteness properties, known to hold for all $N\ne 3$, also hold at $N=3$. Specifically, all discrete quantum duals $\widehat{U^+_N}$ and $\widehat{O^+_N}$ are residually finite, and hence also have the Kirchberg factorization property and are hyperlinear. As another consequence, $U^+_N$ are topologically generated by $U_N$ and their maximal tori $\widehat{\mathbb{Z}^{*N}}$ (dual to the free groups on $N$ generators) and similarly, $O^+_N$ are topologically generated by $O_N$ and {\it their} tori $\widehat{\mathbb{Z}_2^{*N}}$.
\end{abstract}

\noindent {\em Key words: compact quantum group, free unitary group, free orthogonal group, torus, topological generation, residually finite, hyperlinear, Kirchberg factorization property}

\vspace{.5cm}

\noindent{MSC 2010: 20G42; 16T20; 46L52}


\section*{Introduction}

The compact quantum group literature has recently seen considerable interest in the notion of {\it topological generation} (e.g. \cite{bcv,ban-ax,ban-hl,ban-tor}). The term was coined in \cite{bcv}, and the concept naturally extends its classical counterpart, applicable to ordinary compact groups:

Let $H,K\le G$ be two closed subgroups of a compact group. Let also $\cO(-)$ denote the algebra of {\it representative functions} on a compact group (i.e. the algebra generated by matrix entries of finite-dimensional representations on Hilbert spaces). Then, $G$ is the closure of the subgroup generated by $H$ and $K$, written
\begin{equation*}
  G=\langle H,\ K\rangle,
\end{equation*}
if and only if there is no proper quotient $*$-Hopf algebra $\cO(G)\to \cA$ through which both
\begin{equation*}
  \cO(G)\to \cO(H)\quad\text{and}\quad \cO(G)\to \cO(K)
\end{equation*}
factor.

Formally, the compact quantum groups in this paper are objects dual to the CQG algebras of \cite{dk}: cosemisimple complex Hopf $*$-algebras $\cA$ with positive unital integral $h:\cA\to \bC$. These are to be thought of as algebras $\cO(G)$ of representative functions on the corresponding compact quantum group. Given that classically topological generation can be cast in function-algebra terms, the notion carries over to the quantum setting (this is a paraphrase of \cite[Definition 4]{bcv}):

\begin{definition}\label{def.tg}
  Let $G_i\le G$, $i\in I$ be a family of quantum subgroups of a compact quantum group. The family {\it topologically generates} $G$, written as
  \begin{equation*}
    G=\langle G_i,\ i\in I\rangle
  \end{equation*}
  if the quotients
  \begin{equation*}
    \cO(G)\to \cO(G_i)
  \end{equation*}
  do not factor through any proper quotient Hopf $*$-algebra of $\cO(G)$. 
\end{definition}
Here (and throughout the paper) `{\it quantum subgroups}' means quotient Hopf $*$-algebra. 

The notion of topological generation appears under different terminology in \cite{chi-rfd}. Specifically, \cite[Defintiion 2.15]{chi-rfd} introduces the concept of a {\it jointly full} family of functors $\cC\to \cC_i$. In that language, $G_i$ topologically generate $G$ precisely when the scalar corestriction functors
\begin{equation}\label{eq:10}
  \cM^{\cO(G)}\to \cM^{\cO(G_i)}
\end{equation}
between the respective categories of comodules form a jointly full family.

As yet another characterization, the condition is equivalent to the requirement that the morphism
\begin{equation*}
  \cO(G)\to \prod^{CQG} \cO(G_i)
\end{equation*}
to the product in the category of CQG algebras ({\it not} the product of underlying algebras!) resulting from the maps \Cref{eq:10} is an embedding.

In the language of \cite[Definition 1.12]{bbcw}, \cite[Theorem 3.1]{chi-rfd} shows that the Pontryagin duals to the free unitary and orthogonal groups $U^+_N$ and $O^+_N$, $N\ge 2$ are residually finite provided $N\ne 3$ (see \Cref{se.prel} below for a reminder on these objects). The reason for the gap is that the argument proceeds inductively, using the following topological generation result (\cite[Lemma 3.13 and Remark 3.14]{chi-rfd}):

\begin{theorem}\label{th.ugen}
  For all $N\ge 4$ we have
  \begin{equation*}
    U^+_N = \langle U_N,\ U^+_{N-1}\rangle.
  \end{equation*}
\end{theorem}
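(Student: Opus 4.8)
The plan is to run the argument through the Tannaka--Krein formalism, i.e.\ to verify that the corestriction functors
\begin{equation*}
  \cM^{\cO(U^+_N)}\longrightarrow \cM^{\cO(U_N)}\qquad\text{and}\qquad \cM^{\cO(U^+_N)}\longrightarrow \cM^{\cO(U^+_{N-1})}
\end{equation*}
form a jointly full family (in the sense recalled in the excerpt). Since $\mathrm{Rep}(U^+_N)$ is generated as a rigid tensor category by the fundamental corepresentation $u$ and its conjugate $\bar u$, and Frobenius reciprocity lets one trade spaces $\mathrm{Hom}(u^{\otimes w},u^{\otimes w'})$ for spaces of invariant vectors $\mathrm{Hom}(\1,u^{\otimes w''})$, the statement reduces to a purely linear-algebraic assertion: for every word $w$ in the letters $u,\bar u$, writing $V=\bC^N$ with its standard $U_N$-comodule structure on one side and with the (decomposable) $U^+_{N-1}$-comodule structure $\bC^{N-1}\oplus\1$ on the other,
\begin{equation*}
  (V^{\otimes w})^{U^+_N}=(V^{\otimes w})^{U_N}\cap (V^{\otimes w})^{U^+_{N-1}}
\end{equation*}
as subspaces of the plain vector space $V^{\otimes |w|}$. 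The inclusion $\subseteq$ is automatic, so all the content is in the reverse inclusion.

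Next I would invoke the diagrammatic descriptions of the three spaces. By Banica's computation of these representation categories, $(V^{\otimes w})^{U^+_N}$ has a basis consisting of the vectors $\xi_\pi$ attached to \emph{non-crossing} pair partitions $\pi$ of the legs of $w$ whose blocks join a $u$-leg to a $\bar u$-leg, and these are linearly independent for all $N\ge 2$; by Schur--Weyl for $U_N$, $(V^{\otimes w})^{U_N}$ is spanned by the $\xi_\pi$ over \emph{all} admissible pair partitions (crossings allowed), with no relations once $|w|\le 2N$; and $(V^{\otimes w})^{U^+_{N-1}}$, obtained by decomposing along $\bC^N=\bC^{N-1}\oplus\bC e_N$, is spanned by vectors that place a non-crossing admissible pairing on a sub-collection $S$ of the legs and the constant vector $e_N$ on $S^c$. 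Granting this, take $T$ in the intersection and write $T=\sum_\pi c_\pi\xi_\pi$ from the $U_N$-side. The crucial point is that the coordinate projection $P\colon\bC^N\to\bC^{N-1}$ and its complement are $U^+_{N-1}$-colinear, so $P^{\otimes|w|}T=\sum_\pi c_\pi\,\xi'_\pi$ is a $U^+_{N-1}$-invariant vector in $(\bC^{N-1})^{\otimes|w|}$, and by the defining property of $U^+_{N-1}$ (available since $N-1\ge 2$) it lies in the span of the non-crossing $\xi'_\pi$. When $|w|\le 2(N-1)$ the vectors $\xi'_\pi$ are themselves linearly independent, forcing $c_\pi=0$ for every crossing $\pi$, so $T\in(V^{\otimes w})^{U^+_N}$. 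This handles all words of length at most $2(N-1)$.

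The genuinely harder case is that of longer words, where the pairing vectors degenerate into linear dependencies over $\bC^{N-1}$ (and, once $|w|>2N$, even over $\bC^N$), so that neither the expansion $T=\sum c_\pi\xi_\pi$ nor the conclusion can be read off coefficient-wise. Here I would induct on $|w|$, exploiting that the capping maps — composition with an evaluation $\bar u\otimes u\to\1$ or $u\otimes\bar u\to\1$ on two neighbouring legs of opposite colour, which always exist for a balanced word of length $\ge 2$ — are morphisms in all three categories at once, so that every capping of an element of the intersection is again an element of the intersection for a strictly shorter word, hence (inductively) a combination of non-crossing diagrams. The remaining task is to propagate this back up: one localizes a hypothetical crossing of $T$ to four of its legs and then combines the $U^+_{N-1}$-projection trick with the $U_N$-side relations on a carefully chosen sub-collection of the legs so that the relevant ``projected'' crossing diagrams stay linearly independent. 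This last piece of bookkeeping is the main obstacle, and it is where the hypothesis $N\ge 4$ should be spent — one needs enough ambient dimension for the reduction to remain non-degenerate. Conceptually the whole statement can also be packaged as the claim that $\bigcap_{g\in U_N}g\cdot(V^{\otimes w})^{U^+_{N-1}}\cdot g^{-1}=(V^{\otimes w})^{U^+_N}$, i.e.\ that an intertwiner which is of $U^+_{N-1}$-type relative to \emph{every} line in $\bC^N$ is necessarily a non-crossing one; this is the geometric reason the result should be true, and it also explains why the classical subgroup $U_N$ is exactly what is needed to ``average over'' the distinguished line that breaks the symmetry of $U^+_{N-1}\subseteq U^+_N$.
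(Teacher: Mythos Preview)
Your Tannakian setup and the reduction to fixed vectors are correct, and the argument for words of length at most $2(N-1)$ is sound. The gap is in the long-word regime: the ``capping induction'' you sketch is not actually carried out, and as stated it does not close. Knowing that every cap of $T$ lies in the non-crossing span does not by itself force $T$ to lie there---the joint kernel of the capping maps is large, and your proposed localisation of a crossing to four legs together with a ``carefully chosen sub-collection'' is left entirely heuristic. You flag this yourself as ``the main obstacle'', and indeed it is: there is no evident way to spend the hypothesis $N\ge 4$ at that stage so as to keep the projected crossing diagrams independent for arbitrarily long words.

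The approach the paper cites from \cite{chi-rfd} (and reprises in the proof of \Cref{pr.gen-rec}) sidesteps the length issue altogether. Rather than projecting only onto $W^{(\varepsilon_i)}$, one examines \emph{every} mixed summand $Z^{(\varepsilon_i)}$ with each $Z\in\{W,U\}$, where $V=W\oplus U$ and $\dim U=1$. After subtracting non-crossing pairings so that $f|_{W^{(\varepsilon_i)}}=0$, on each mixed piece the composite $f\circ\psi$ (where $\psi$ inserts a fixed basis vector of $U$ in the $U$-slots) is a combination $\sum_\pi c_\pi\pi$ of \emph{non-crossing} pairings of the remaining $W$-legs only. The decisive trick is that $U_N$-equivariance of $f$ makes these coefficients independent of the pair $(W,U)$ and, by continuity, valid even for the degenerate choice $U\subseteq W$; but then $Z^{(\varepsilon_i)}\subseteq W^{(\varepsilon_i)}$, so $f\circ\psi=0$ and hence $\sum_\pi c_\pi\pi=0$. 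At this point one only needs the \emph{non-crossing} pairings on an $(N-1)$-dimensional space to be linearly independent, which holds for every word length as soon as $N-1\ge 2$. That is exactly where the two strategies diverge: yours attempts to separate crossing from non-crossing diagrams over $\bC^{N-1}$, which is what forces the length restriction and leaves the general case open, whereas the argument in the paper arranges matters so that only non-crossing diagrams ever appear in the comparison, and their independence is unconditional.
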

Similarly, the case $N=3$ is problematic in \cite{bcv,bw} for reasons ultimately traceable to the same phenomena (see e.g. \cite[Remark 5]{bcv}).

One of the main goals of the present note is to extend \Cref{th.ugen} to $N=3$ in \Cref{pr.gen-rec} below. This will then have a number of consequences:
\begin{itemize}
\item First off, the results of \cite{bcv,chi-rfd,bw} extend to $N=3$.
\item Secondly, we obtain in \Cref{se.appl} results to the effect that for all $N\ge 2$ the free unitary and orthogonal quantum groups $U^+_N$ and $O^+_N$ are topologically generated by their ``maximal tori''.
\end{itemize}

The underlying Hopf algebra $\cO(U^+_N)$ is generated as a $*$-algebra by generators $u_{ij}$, $1\le i,j\le N$. Annihilating the off-diagonal generators
\begin{equation*}
  u_{ij},\ 1\le i\ne j\le N
\end{equation*}
produces a quotient Hopf algebra isomorphic to the group algebra $\cO(\bT^+_N)$ of the free group on $N$ generators (the images of the diagonal $u_{ii}$). This is the non-commutative analogue of the function algebra on the ``standard torus'' embedded diagonally in the unitary group $U_N$, and the second bullet point above is a paraphrase of \Cref{th.u}, that $U^+_N$ is generated by all of the conjugates of $\bT^+_N$ by the elements of the classical subgroup $U_N<U^+_N$.

The orthogonal picture provided by \Cref{th.o} is similar, the analogue of the maximal torus $\bT^+_N$ this time being the Pontryagin dual of the free product $\bZ_2^{*N}$ of $N$ copies of $\bZ_2$.

The study of maximal tori in compact quantum groups $G$ (meaning maximal cocommutative Hopf $*$-algebra quotients of $\cO(G)$) was initiated in \cite{ban-pat} and \cite{ban-tor} treats torus-generation themes similar to some the present paper's contents.

\Cref{se.prel} briefly recalls some of the relevant background.

In \Cref{se.main} we prove \Cref{pr.gen-rec}, extending the topological generation result in \Cref{th.ugen} to the case $N=3$.

Finally, \Cref{se.appl} records the torus-generation consequences alluded to above, in \Cref{th.u,th.o}.

\subsection*{Acknowledgements}

This work was partially supported by NSF grant DMS-1801011.

I am grateful for numerous stimulating conversations with Teodor Banica (who has been very generous in sharing his problems), Michael Brannan and Amaury Freslon.

\section{Preliminaries}\label{se.prel}

We assume some Hopf algebra background; \cite{swe,abe,mon,rad} are all good sources. For the purposes of this paper compact quantum groups appear in their CQG algebra guise, as in \cite[Definition 2.2]{dk}. An equivalent characterization of those objects reads as follows. 

\begin{definition}\label{def.cqg}
  A {\it CQG algebra} is a complex Hopf $*$-algebra $\cA$ with the following properties:
  \begin{itemize}
  \item $\cA$ is cosemisimple, i.e. its category of (either left or right) comodules is semisimple.
  \item the unique unital left and right integral (\cite[Definition 2.4.4 and Theorem 2.4.6]{mon}) $h:\cA\to \bC$ is positive, in the sense that $h(x^*x)\ge 0$ for all $x\in \cA$.
  \end{itemize}

  The category of {\it compact quantum groups} is the category dual to that of CQG algebras. We write $\cO(G)$ for the Hopf algebra attached to a compact quantum group $G$. 
\end{definition}

We occasionally refer to compact quantum groups as `quantum groups', the phrase being unambiguous throughout this note.

\begin{example}
The preeminent examples in our context are algebras $\cO(G)$ of {\it representative functions} on compact groups $G$. $h$ is then simply integration against the Haar measure on $G$, justifying the terminology of {\it Haar state} for the functional $h$ from \Cref{def.cqg}.   
\end{example}

\begin{example}\label{ex.u}
  For $N\ge 2$ the {\it free unitary group} $U^+_N$ is the compact quantum group whose underlying CQG algebra $\cO(U^+_N)$ is generated by $u_{ij}$, $1\le i,j\le N$ as a $*$-algebra, with relations demanding that both
  \begin{equation*}
    u:=(u_{ij})_{i,j}\quad\text{and}\quad \overline{u}:=(u^*_{ij})_{i,j}\in M_N(\cA)
  \end{equation*}
  are unitary.

  The Hopf algebra structure is given by comultiplication and counit
  \begin{equation*}
    \Delta:u_{ij}\mapsto \sum_k u_{ik}\otimes u_{kj}\quad\text{and}\quad \varepsilon:u_{ij}\mapsto \delta_{ij}
  \end{equation*}
  respectively, where $\delta_{ij}$ denotes the Kronecker delta. 
\end{example}

\begin{example}\label{ex.o}
Let $N\ge 2$ again. The {\it free orthogonal group} $O^+_N$ has associated Hopf algebra $\cO(O^+_N)$ as in \Cref{ex.u}, with the additional constraints that the generators $u_{ij}$ be self-adjoint.  
\end{example}

\Cref{ex.u,ex.o} are central to the discussion below. The objects were introduced in \cite{DaeWan96}, there the respective Hopf algebras were denoted by $A_u(N)$ and $A_o(N)$. Similarly, we have the following construction introduced in \cite{Wan98}.

\begin{example}\label{ex.s}
For $N\ge 2$ the {\it free symmetric group} $S^+_N$ has CQG algebra $\cO(S^+_N)$ generated by the self-adjoint idempotent elements $u_{ij}$ (i.e. projections) such that the sums across the rows and columns of the matrix $u=(u_{ij})_{i,j}$ in $M_N(\cO(S^+_N))$ are all equal to $1\in \cO(S^+_N)$. 
\end{example}

\begin{example}\label{ex.g}
  For every discrete group $\Gamma$ the group algebra $\bC\Gamma$ is a CQG algebra when equipped with the $*$-structure making all $\gamma\in \Gamma$ unitary and with the comultiplication and counit
  \begin{equation*}
    \Delta:\gamma\mapsto \gamma\otimes \gamma\quad\text{and}\quad \varepsilon:\gamma\mapsto 1.
  \end{equation*}

Conceptually, the compact quantum group attached to this CQG algebra should be thought of as the {\it Pontryagin dual} of $\Gamma$, for which reason we write $\bC\Gamma=\cO(\widehat{\Gamma})$. 
\end{example}

Motivated by \Cref{ex.g} we write, as is customary in the field, $\widehat{G}$ for the {\it discrete} quantum group'' dual to $G$. For our purposes, no separate definition of a discrete quantum group is necessary: $\widehat{G}$ is to be thought of simply as a virtual object whose underlying ``group algebra'' $\cO(G)$.

We will also refer frequently to representations of a compact quantum group:

\begin{definition}\label{def.rep}
  Let $G$ be a compact quantum group. Its {\it category of representations} is the category $\cM^{\cO(G)}$ of $\cO(G)$-comodules.
\end{definition}

Just as is the case for an ordinary compact group, representations form a monoidal category. Furthermore, the finite-dimensional representations form a {\it rigid} monoidal category: every object $V$ has a dual $V^*$ and the canonical evaluation map $V^*\otimes V\to \bC$ is a morphism in $\cM^{\cO(G)}$ to the trivial object $\bC$. We refer to \cite[Chapter 2]{egno} for background on the topic.

\section{Free unitary groups in small degree}\label{se.main}

As recalled in the introduction, one of the results of \cite{chi-rfd} is the fact that for all $N\ge 4$ we have
\begin{equation}\label{eq:1}
  U^+_N = \langle U_N,\ U^+_{N-1}\rangle:
\end{equation}
see \cite[Lemma 3.13 and Remark 3.14]{chi-rfd}. 

The first aim of the present note is to observe that in fact the proof of \cite[Lemma 3.13]{chi-rfd} can be slightly altered so as to also allow \Cref{eq:1} for $N=3$:

\begin{proposition}\label{pr.gen-rec}
  For all $N\ge 3$ we have
\begin{equation*}
  U^+_N = \langle U_N,\ U^+_{N-1}\rangle.
\end{equation*}  
\end{proposition}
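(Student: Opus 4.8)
The plan is to mimic the inductive argument of \cite[Lemma 3.13]{chi-rfd} while tracking carefully which dimension hypotheses are actually used, and then to patch the single spot where $N\ge 4$ was invoked. Topological generation $U^+_N=\langle U_N,\ U^+_{N-1}\rangle$ is, by \Cref{def.tg}, the statement that no proper quotient Hopf $*$-algebra $\cO(U^+_N)\to\cA$ admits both the classical quotient $\cO(U^+_N)\to\cO(U_N)$ and the corner quotient $\cO(U^+_N)\to\cO(U^+_{N-1})$ (the latter being the map that sends $u_{iN},u_{Ni}$ to $\delta_{iN}$). Equivalently, in the comodule-category language of \Cref{eq:10}, one must show that the joint corestriction functor $\cM^{\cO(U^+_N)}\to\cM^{\cO(U_N)}\times\cM^{\cO(U^+_{N-1})}$ is jointly full; and since the fundamental comodule $u$ tensor-generates $\cM^{\cO(U^+_N)}$, it suffices to control the $\cO(U^+_N)$-subcomodules of tensor powers of $u\oplus\overline u$ that remain invariant after corestriction to both subgroups. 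Concretely, I would argue that any Hopf $*$-subalgebra $\cB\subseteq \cO(U^+_N)$ containing the matrix coefficients that are ``visible'' from both $U_N$ and $U^+_{N-1}$ must contain all the $u_{ij}$, hence be everything.

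The key steps, in order: (1) Reduce to showing that a quotient $\cO(U^+_N)\to\cA$ through which both restrictions factor is an isomorphism; dually, let $\cQ\subseteq\cM^{\cO(U^+_N)}$ be the full monoidal subcategory of comodules on which the two corestrictions are compatible, and show $\cQ$ is everything. (2) Use the $U^+_{N-1}$-quotient: it identifies the upper-left $(N-1)\times(N-1)$ block of $u$ with the fundamental corepresentation of $U^+_{N-1}$, so all intertwiners among tensor powers of that block that exist for $U^+_{N-1}$ must be ``seen'' by $\cA$. (3) Use the $U_N$-quotient: classically $U_N$ acts on the span of the $u_{ij}$ and their conjugates, and the relevant $U_N$-intertwiner spaces (the Brauer/Schur–Weyl data) are strictly smaller than the free ones; the combination of (2) and (3) forces enough intertwiners to collapse $\cA$ onto $\cO(U^+_N)$. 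Here one exploits that $\cO(U^+_N)$ has ``the fewest possible'' intertwiners (the Temperley–Lieb / free-probabilistic description via noncrossing pairings of $u$'s and $\overline u$'s), so matching the $U^+_{N-1}$-intertwiners and adding one classical relation is enough. (4) Track the dimension count: the only place the original proof needs $N\ge 4$ is an estimate ensuring a certain representation is nontrivial / a certain inclusion of intertwiner spaces is proper; I would replace that numerical step by a direct check valid already at $N=3$, presumably using that $U^+_3$ (unlike $O^+_3$) is already sufficiently ``large'' — e.g. that the fundamental comodule of $U^+_3$ is not self-conjugate and its tensor square decomposition still forces the needed intertwiner to appear.

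The main obstacle I expect is precisely step (4): isolating the exact inequality in \cite[Lemma 3.13]{chi-rfd} that excludes $N=3$ and showing it is not actually needed. In the orthogonal world $N=3$ is genuinely exceptional (Temperley–Lieb at loop parameter $2\cos(\pi/3)$ degenerates), and the worry is whether an analogous degeneracy infects the unitary argument. The resolution should be that for $U^+_N$ the relevant parameter is governed by $N\ge 2$ rather than $N\ge 3$ for non-degeneracy of the Hecke/Temperley–Lieb data, so the genuine constraint in the induction is the milder $N-1\ge 2$, i.e. $N\ge 3$; hence only the base of the induction, not the inductive step, was ever the issue, and the base $U^+_3=\langle U_3,\ U^+_2\rangle$ can be verified by hand. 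A secondary technical point is making sure the corner embedding $U^+_{N-1}\hookrightarrow U^+_N$ and the classical embedding $U_N\hookrightarrow U^+_N$ generate, at the level of fusion rules, a category with no proper monoidal-and-$*$-closed subcategory containing the generator — this is where the ``jointly full'' reformulation of \cite{chi-rfd} does the bookkeeping cleanly.
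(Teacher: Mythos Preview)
Your framing is right: the proof does go through intertwiners, reducing to the claim that any linear functional $f:V^{(\varepsilon_i)}\to\bC$ that is both a $U_N$- and a $U^+_{N-1}$-intertwiner is already a span of non-crossing pairings. You are also correct that the ultimate dimension constraint is $N-1\ge 2$, coming from linear independence of non-crossing pairings on a space of dimension $\ge 2$. But your step (4) is where the actual content lives, and there the proposal is a hope rather than an argument. You say you would ``replace that numerical step by a direct check valid already at $N=3$'' and that ``the base $U^+_3=\langle U_3,U^+_2\rangle$ can be verified by hand,'' but that \emph{is} the proposition at $N=3$; restating it is not a verification.

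The missing idea is concrete. After normalizing so that $f$ vanishes on $W^{(\varepsilon_i)}$ for a fixed hyperplane $W$, one looks at the restriction of $f$ to a mixed summand $Z^{(\varepsilon_i)}$ with some tensorands equal to a line $U$ complementary to $W$; this restriction is encoded by $f\circ\psi=\sum_\pi c_\pi\,\pi$ on $W^{(\delta_j)}$ for non-crossing pairings $\pi$. The point you are missing is that, because $f$ is a $U_N$-intertwiner, the coefficients $c_\pi$ do not depend on the position of $U$: one can continuously move $U$ \emph{into} $W$ (drop the requirement that $U$ be a complement). In that degenerate position $\psi$ lands in $W^{(\varepsilon_i)}$, where $f$ vanishes by the normalization, so $\sum_\pi c_\pi\,\pi=0$ identically. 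Only now does linear independence of non-crossing pairings on a $2$-dimensional $W$ kick in to give $c_\pi=0$. Without this $U_N$-equivariant deformation trick you have no mechanism to force the coefficients to vanish at $N=3$, and your appeal to ``non-degeneracy of the Hecke/Temperley--Lieb data for $N\ge 2$'' does not supply one.
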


Let $V$ be the fundamental representation of $U_N^+$. Reprising the notation of \cite{chi-rfd}, for a tuple $(\varepsilon_i)_{i=1}^k$ consisting of blanks and `$*$' we set
\begin{equation*}
  V^{(\varepsilon_i)}:= V^{\varepsilon_1}\otimes\cdots\otimes V^{\varepsilon_k}
\end{equation*}
(i.e. a tensor product of copies of $V$ and its dual $V^*$). 

The representation theory of $U^+_N$ is well understood, having been worked out in \cite{ban-un}. In particular, every $U^+_N$-intertwiner
\begin{equation*}
  V^{(\varepsilon_i)}\to \bC
\end{equation*}
is a linear combination of {\it non-crossing partitions}: evaluation of each copy of $V^*$ against one copy of $V$ in such a manner that strings connecting the $(V^*,V)$ pairs can be drawn in the plane so as not to intersect (and all $V$ and $V^*$ get paired off).

\pf{pr.gen-rec}
\begin{pr.gen-rec}
  As noted above, \cite[Lemma 3.13]{chi-rfd} already takes care of the case $N\ge 4$ so we assume $N=3$. We will examine the proof of that result and amplify it appropriately.

  In the proof in question, one considers a tensor product $V^{(\varepsilon_i)}$ of the fundamental representation of $U_N^+$ and seeks to argue that a linear map $f:V^{(\varepsilon_i)}\to \bC$ is a $U^+_N$-intertwiner provided it is an intertwiner over both $U_N$ and $U^+_{N-1}$.

  In turn, the hypothesis on $f$ is reinterpreted as follows: for every decomposition $V=W\oplus U$ with
  \begin{equation*}
    \dim W=N-1,\quad \dim U=1
  \end{equation*}
  the restriction of $f$ to every summand of the form
  \begin{equation}\label{eq:2}
    Z^{(\varepsilon_i)}\subset V^{(\varepsilon_i)},\quad Z\in\{W,U\}
  \end{equation}
  is a linear combination of non-crossing pairings between $W$, $W^*$ and $U$, $U^*$ tensorands. The conclusion would then have to be that $f$ is a span of non-crossing pairings $V^*\otimes V\to \bC$.

  By moving the complement $U$ of $W$ continuously, we can furthermore assume that $W$ and $U$ are in arbitrary position: $U$ can be contained in $W$ as well as complementary to it. Furthermore, by subtracting an appropriate span of non-crossing pairings from $f$ we can assume that
  \begin{equation*}
    f|_{W^{(\varepsilon_i)}}\cong 0;
  \end{equation*}
  the desired conclusion would then be that $f$ vanishes identically on $V^{(\varepsilon_i)}$.

  We now proceed essentially as in the proof of \cite[Lemma 3.13]{chi-rfd}. Consider the restriction of $f$ to a subspace of the form \Cref{eq:2} for a one-dimensional complement $U$ of $W$. Let $(\omega_j)$ be the sub-tuple of $(\varepsilon_i)$ consisting of those $i$ for which the $i^{th}$ tensorand $Z$ in $V^{(\varepsilon_i)}$ is $W$ (i.e. the $(N-1)$-dimensional subspace rather than the $1$-dimensional one). If $e$ and $e^*$ are mutually dual bases of $U$ and $U^*$ respectively, let
  \begin{equation*}
    \psi: W^{(\delta_j)}\to Z^{(\varepsilon_i)}
  \end{equation*}
  be the map obtained by inserting $e$ and $e^*$ in the spots deleted in passing from $(varepsilon_i)$ to $(\delta_j)_j$ (see \Cref{ex.insert}). Then, the restriction of $f$ to $Z^{(\varepsilon_i)}$ is uniquely determined by $f\circ\psi$, and the latter is a linear combination
  \begin{equation}\label{eq:3}
    \sum_{\pi}c_{\pi}\pi
  \end{equation}
  of non-crossing pairings $\pi$ of $W$ and $W^*$ tensorands in $W^{(\delta_j)}$. Because $f$ is a $U_N$-intertwiner, the same linear combination \Cref{eq:3} is valid for any choice of $W$ and $U$, including, as observed above, the non-complementary case of $U\subseteq W$. But in that case $f=0$ because $f$ vanishes on $W^{(\varepsilon_i)}$, meaning that \Cref{eq:3} is identically zero.

  The linear independence of the non-crossing pairings on an $(N-1)$-dimensional space $W$ for $N-1=2$ now finishes the proof.
\end{pr.gen-rec}

\begin{example}\label{ex.insert}
  If, say,
  \begin{equation*}
    Z^{(\varepsilon_i)} = W\otimes U\otimes U^*\otimes W^*
  \end{equation*}
  then
  \begin{equation*}
    W^{(\delta_j)}=W\otimes W^* \text{ and }\psi:W\otimes W^*\to W\otimes U\otimes U^*\otimes W^*
  \end{equation*}
  is the map
  \begin{equation*}
    W\otimes W^*\ni w\otimes w^* \mapsto w\otimes e\otimes e^*\otimes w^*\in W\otimes U\otimes U^*\otimes W^*.
  \end{equation*}
\end{example}

\subsection{Residual finiteness, hyperlinearity and factorization}

As noted briefly in the introductory discussion above, \Cref{pr.gen-rec} allows us to extend some of the finiteness results in the literature to $U^+_3$ and $O^+_3$. This requires that we recall some terminology. The following notion appears in \cite[Definition 1.12]{bbcw}. 

\begin{definition}\label{def.rfd}
  Let $G$ be a compact quantum group. The corresponding discrete quantum group $\widehat{G}$ is {\it residually finite} if $\cO(G)$ is finitely generated and embeds as a $*$-algebra in a product of matrix algebras.
\end{definition}

With this in hand, we have

\begin{theorem}\label{th.rfd}
  One can simply reprise the proof of \cite[Theorem 3.1]{chi-rfd}, in light of the new information provided by \Cref{pr.gen-rec} at $N=3$.
  
  For all $N\ge 2$ the discrete quantum groups $\widehat{U^+_N}$ and $\widehat{O^+_N}$ are residually finite.
\end{theorem}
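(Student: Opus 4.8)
The plan is to reprise, essentially unchanged, the inductive argument that proves \cite[Theorem 3.1]{chi-rfd}, the one new ingredient being that \Cref{pr.gen-rec} now also licenses the inductive step at $N=3$ --- the single place where \cite{chi-rfd} was forced to assume $N\ge 4$ (via \Cref{th.ugen}). Recall first that, for a compact quantum group $G$, the dual $\widehat G$ is residually finite in the sense of \Cref{def.rfd} exactly when $\cO(G)$ is finitely generated and its finite-dimensional $*$-representations separate its points (equivalently, $\cO(G)$ is residually finite dimensional as a $*$-algebra). For each $\cO(U^+_N)$ and $\cO(O^+_N)$ the finite-generation clause is immediate, the $u_{ij}$ --- together with their adjoints in the unitary case --- being generators, so the entire content is the separation of points.

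I would isolate the two permanence properties the induction rests on. \emph{(i)} If $\cO(G)\hookrightarrow\cO(K)$ is an injective morphism of CQG algebras with $\cO(G)$ finitely generated and $\widehat K$ residually finite, then $\widehat G$ is residually finite, because a point-separating family of finite-dimensional $*$-representations of $\cO(K)$ restricts to one of the Hopf $*$-subalgebra $\cO(G)$. \emph{(ii)} If $\widehat H$ and $\widehat K$ are residually finite, then so is $\widehat{H\ast K}$, where $H\ast K$ is the free product of compact quantum groups (so $\cO(H\ast K)$ is the product of $\cO(H)$ and $\cO(K)$ in the category of CQG algebras); this is the quantum analogue of Gruenberg's theorem on free products of residually finite groups and is the one genuinely substantive input --- it is what \cite{chi-rfd} supplies, and I would simply cite it. Alongside these I would record the base facts, all contained in \cite{chi-rfd}: $\widehat{U_N}$ and $\widehat{O_N}$ are residually finite for every $N$ (point evaluations already separate points of the finitely generated algebras $\cO(U_N)$ and $\cO(O_N)$), and $\widehat{U^+_2}$ and $\widehat{O^+_2}$ are residually finite --- the rank-two cases genuinely have to be supplied by hand, since the topological-generation input degenerates when the relevant subspace $W$ below is a line.

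The unitary half then follows by induction on $N\ge 2$. For $N\ge 3$, \Cref{pr.gen-rec} asserts $U^+_N=\langle U_N,\,U^+_{N-1}\rangle$, which by the reformulation of topological generation recalled in the introduction says precisely that the canonical morphism
\[
  \cO(U^+_N)\longrightarrow \cO(U_N\ast U^+_{N-1})
\]
is injective. By the inductive hypothesis together with the classical base fact, $\widehat{U_N}$ and $\widehat{U^+_{N-1}}$ are residually finite, so \emph{(ii)} makes $\widehat{U_N\ast U^+_{N-1}}=\widehat{U_N}\ast\widehat{U^+_{N-1}}$ residually finite, and then \emph{(i)} transports residual finiteness down to $\widehat{U^+_N}$. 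This is the single point at which the hypothesis $N\ne 3$ entered \cite{chi-rfd}, and \Cref{pr.gen-rec} disposes of it.

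The orthogonal half runs along identical lines once one has the orthogonal counterpart $O^+_N=\langle O_N,\,O^+_{N-1}\rangle$ for all $N\ge 3$. I would obtain this from the very argument used for \Cref{pr.gen-rec}: moving a complementary line $U$ in and out of the codimension-one subspace $W\subset V$, subtracting an appropriate combination of non-crossing pairings so as to kill $f|_{W^{(\varepsilon_i)}}$, and then invoking linear independence of non-crossing pairings on the two-dimensional space $W$. The only change is that the oriented non-crossing partition calculus of $U^+_N$ is replaced throughout by the Temperley--Lieb calculus of $O^+_N$ (where $V\cong V^*$), the relevant linear independence being again that of Temperley--Lieb diagrams on $\bC^2$; alternatively, one may invoke the reduction of the orthogonal to the unitary case used in \cite{chi-rfd}. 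Feeding this into the induction on $N$, with base case $\widehat{O^+_2}$ and the same properties \emph{(i)} and \emph{(ii)}, gives residual finiteness of $\widehat{O^+_N}$ for all $N\ge 2$. I expect the only real point of substance in the whole scheme to be property \emph{(ii)} --- stability of residual finiteness under free products of discrete quantum groups --- which is already in place in \cite{chi-rfd}; everything else is a matter of checking that the descent to $N=3$ disturbs neither the rank-two base cases nor any step other than the one that \Cref{pr.gen-rec} now covers, and that the orthogonal analogue of \Cref{pr.gen-rec} at $N=3$ really does go through verbatim.
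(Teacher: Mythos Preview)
Your proposal is correct and, for the unitary half, coincides with the paper's proof: your properties \emph{(i)} and \emph{(ii)} together are precisely the content of \cite[Corollary 2.16]{chi-rfd}, and the base case $\widehat{U^+_2}$ is \cite[Lemma 3.9]{chi-rfd}, so the induction via \Cref{pr.gen-rec} is identical.

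The one place you diverge is the orthogonal half. The paper does not establish an orthogonal analogue $O^+_N=\langle O_N,\,O^+_{N-1}\rangle$ of \Cref{pr.gen-rec}; instead it invokes \cite[Proposition 3.8]{chi-rfd}, which shows directly that residual finiteness of $\widehat{O^+_N}$ is \emph{equivalent} to that of $\widehat{U^+_N}$, and so the orthogonal conclusion falls out of the unitary one with no further work. Your primary route --- rerunning the proof of \Cref{pr.gen-rec} with the unoriented Temperley--Lieb calculus and using linear independence of non-crossing pair partitions at $\dim W=2$ --- would also succeed (the diagrams are indeed independent at loop parameter $\delta=2$), but it is more labor than the paper expends. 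You do mention the reduction from \cite{chi-rfd} as an alternative, which is exactly what the paper uses; promoting that to your main argument would align your write-up with the paper and spare you the separate orthogonal computation.
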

\begin{proof}
  \cite[Corollary 2.16]{chi-rfd} says that residual finiteness for discrete quantum groups $\widehat{G}$ is inherited from families $\widehat{G_i}$ if
  \begin{equation*}
    G=\langle G_i\rangle
  \end{equation*}
  is a topologically generating family. \cite[Lemma 3.9]{chi-rfd} proves the claim for $\widehat{U^+_2}$, and since residual finiteness also holds for duals $\widehat{U_N}$ of classical unitary groups the conclusion in the unitary case follows inductively from \Cref{pr.gen-rec}.

  As for the orthogonal claim, it is equivalent to the unitary counterpart by \cite[Proposition 3.8]{chi-rfd}.
\end{proof}

Residual finiteness, in turn, entails other properties of interest in the literature. The {\it Connes embedding problem} (CEP for short) raised in \cite{cep} has driven much of the development in operator algebras. It asks whether every finite von Neumann algebra $N$ with separable predual $N_*$ equipped with a trace $\tau$ embeds in a trace-preserving fashion in an ultrapower $R^{\omega}$ of the hyperfinite $II_1$ factor $R$ with respect to some ultrafilter $\omega$ on $\bN$.

A discrete group is {\it hyperlinear} when its GNS von Neumann algebra with respect to the standard trace satisfies CEP. Motivated by this, we have \cite[\S 3.2]{bcv}.

\begin{definition}\label{def.hyp}
  A discrete quantum group $\widehat{G}$ is {\it hyperlinear} if the GNS von Neumann algebra associated to $\cO(G)$ equipped with the state $h$ satisfies CEP.
\end{definition}

\begin{remark}\label{re.kac}
  Note that the discussion in \Cref{def.hyp} is only meaningful for {\it Kac type} quantum groups, i.e. those for which the Haar state $h$ is tracial: $h(xy)=h(yx)$ for all $x,y\in \cO(G)$.
\end{remark}

The following result extends \cite[Corollary 4.3, Theorem 4.4]{bcv} to $N=3$. 

\begin{corollary}\label{cor.hyp}
The discrete duals $\widehat{U^+_N}$ and $\widehat{O^+_N}$ are hyperlinear for all $N\ge 2$.   
\end{corollary}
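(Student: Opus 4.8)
The plan is to deduce the corollary from \Cref{th.rfd} together with the standard chain of implications ``residually finite $\Rightarrow$ Kirchberg factorization property $\Rightarrow$ hyperlinear''. For $N\neq 3$ the assertion is already \cite[Corollary 4.3, Theorem 4.4]{bcv}, so the only genuinely new case is $N=3$; there the sole missing input had been the residual finiteness of $\widehat{U^+_3}$ and $\widehat{O^+_3}$, which \Cref{th.rfd} now supplies. (Alternatively one can simply run the argument uniformly for all $N\ge 2$, the $N\le 2$ and $N\ge 4$ cases being unaffected.)

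First I would record that $U^+_N$ and $O^+_N$ are of Kac type: their fundamental corepresentations are unitary together with their conjugates (self-adjoint, in the orthogonal case), so the associated modular data is trivial and the Haar state $h$ is tracial. By \Cref{re.kac} this is precisely the situation in which \Cref{def.hyp} is meaningful, and $L^\infty(G):=\pi_h(\cO(G))''$ is then a finite von Neumann algebra carrying a faithful normal trace extending $h$.

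Next I would run the abstract argument. By \Cref{def.rfd} and \Cref{th.rfd}, for $G\in\{U^+_N,O^+_N\}$ the algebra $\cO(G)$ is finitely generated and admits a $*$-embedding $\cO(G)\hookrightarrow\prod_k M_{n_k}(\bC)$. Composing with the normalized traces $\tau_k$ on the matrix blocks and passing to an ultralimit along a free ultrafilter $\omega$ on the index set produces a tracial state $\tau_\omega$ on $\cO(G)$ and a trace-preserving $*$-homomorphism $\cO(G)\to\prod^\omega\big(M_{n_k}(\bC),\tau_k\big)\hookrightarrow R^\omega$. One then identifies $\tau_\omega$ with $h$ — this is where the compatibility of the finite-dimensional models with the Haar state enters (equivalently, uniqueness of the tracial state on $C_r(G)$ for the $G$ in question), handled exactly as in \cite[\S 3.2, \S 4]{bcv} and \cite{bbcw}. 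With $\tau_\omega=h$, the map above extends to a trace-preserving normal embedding $L^\infty(G)\hookrightarrow R^\omega$; that is, $\widehat G$ has the Kirchberg factorization property, so $L^\infty(G)$ satisfies CEP and $\widehat G$ is hyperlinear.

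I do not anticipate a real obstacle: once \Cref{pr.gen-rec} and hence \Cref{th.rfd} are in hand, the corollary is a formal consequence, and the only point requiring a word of care is the identification of the ultralimit trace with the Haar state, which is treated just as in the cited references and introduces nothing new at $N=3$.
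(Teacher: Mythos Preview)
Your opening plan---deduce hyperlinearity from \Cref{th.rfd} via the chain ``residually finite $\Rightarrow$ factorization $\Rightarrow$ hyperlinear''---is exactly what the paper does: it cites \cite[Theorem~2]{bbcw} for the first arrow and \cite[Remark~2.11]{bw} for the second (see \Cref{re.rfd-f-cep}). Had you simply executed that plan, the proof would be complete and would match the paper verbatim.

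The direct construction in your middle paragraph, however, contains both a mislabeling and a genuine gap. The mislabeling: a trace-preserving embedding $L^\infty(G)\hookrightarrow R^\omega$ \emph{is} hyperlinearity, not the Kirchberg factorization property (\Cref{def.fact} is a different statement about the minimal $C^*$-tensor product), so your clause ``that is, $\widehat G$ has the Kirchberg factorization property, so $L^\infty(G)$ satisfies CEP'' has the logic inverted. The gap: residual finiteness furnishes only \emph{some} $*$-embedding $\cO(G)\hookrightarrow\prod_k M_{n_k}$, and there is no reason the ultralimit $\tau_\omega$ of the normalized matrix traces should equal $h$. Your appeal to uniqueness of the tracial state on $C_r(G)$ does not help, because the finite-dimensional representations factor through the \emph{universal} completion $C_u(G)$, which always carries tracial states other than $h$ (the counit $\varepsilon$, for one). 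Neither \cite{bbcw} nor \cite{bcv} performs this identification starting from a bare residually-finite embedding: \cite{bbcw} proves residual finiteness $\Rightarrow$ factorization by a different mechanism, and \cite{bcv} builds specific matrix models engineered to approximate $h$. The fix is simply to drop the ad hoc ultrapower argument and run your announced plan, citing the two black-box implications as the paper does.
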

\begin{proof}
This follows from \Cref{th.rfd}: as in the case of ordinary discrete groups, residual finiteness entails hyperlinearity (see \Cref{re.rfd-f-cep} below). 
\end{proof}

We next we turn to the Kirchberg factorization property, introduced in \cite{kirch} for discrete groups and studied more generally in the context of discrete {\it quantum} groups in \cite{bw,bbcw} (see \cite[Theorem 28 and Definitions 2.9 and 2.10]{bw}).

\begin{definition}\label{def.fact}
  A discrete quantum group $\widehat{G}$ has the {\it Kirchberg factorization property} if the natural action of the algebraic tensor product $\cO(G)\otimes \cO(G)^{op}$ on the GNS Hilbert space $L^2(G,h)$ with respect to the Haar stat extends to the minimal $C^*$ tensor product of the two enveloping $C^*$-algebras.
\end{definition}

Once more, the bulk of the following result known: all cases $N\ne 3$ are covered by \cite[Theorems 4.3 and 4.4]{bw}.

\begin{corollary}\label{cor.fact}
All $\widehat{U^+_N}$ and $\widehat{O^+_N}$, $N\ge 2$ have the Kirchberg factorization property.   
\end{corollary}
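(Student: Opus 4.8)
The plan is to obtain this exactly as \Cref{cor.hyp} was obtained: as a formal consequence of the residual finiteness established in \Cref{th.rfd}. First I would note that $U^+_N$ and $O^+_N$ are of Kac type --- the antipode of $\cO(U^+_N)$ squares to the identity, since $S(u_{ij})=u_{ji}^*$ and $S(u_{ij}^*)=u_{ji}$ give $S^2=\id$, and likewise $S(u_{ij})=u_{ji}$ on $\cO(O^+_N)$ yields $S^2=\id$, so by the standard criterion the Haar state is tracial --- so that \Cref{def.fact} genuinely applies here, in parallel with the caveat of \Cref{re.kac} for hyperlinearity.

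The substantive input is then the discrete-quantum-group analogue of Kirchberg's theorem that residually finite groups have the factorization property: for a Kac-type $\widehat{G}$, residual finiteness in the sense of \Cref{def.rfd} implies the Kirchberg factorization property. This is the kind of statement recorded in \cite{bbcw}, the same source as \Cref{def.rfd} itself; granting it, \Cref{cor.fact} follows at once from \Cref{th.rfd}.

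Alternatively --- and staying nearer to the literature this corollary extends --- one can reprise the inductive proof of \cite[Theorems 4.3 and 4.4]{bw} verbatim. That argument establishes the factorization property for $\widehat{U^+_2}$ and for the classical duals $\widehat{U_N}$, and pushes it upward along topologically generating families, using the factorization-property analogue of the inheritance statement \cite[Corollary 2.16]{chi-rfd} supplies for residual finiteness. The sole point at which \cite{bw} stalls is $N=3$, for want of \Cref{eq:1} at that value; with \Cref{pr.gen-rec} now available the induction runs without interruption, and the orthogonal case is deduced from the unitary one as in \cite[Proposition 3.8]{chi-rfd}.

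The only step I expect to carry real content, as opposed to bookkeeping, is checking that whichever black box one invokes applies off the shelf: either that the quantum Kirchberg implication ``residually finite $\Rightarrow$ factorization property'' holds at the level of generality needed, or that the inheritance of the factorization property through topological generation is on record in the form \cite[Corollary 2.16]{chi-rfd} gives for residual finiteness. The remaining ingredients --- Kac-type-ness, the base cases, and the unitary-to-orthogonal reduction --- are already in hand from \Cref{se.prel} and the cited works.
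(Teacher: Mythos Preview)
Your proposal is correct and your first approach is exactly the paper's: deduce the corollary from \Cref{th.rfd} via the implication ``residually finite $\Rightarrow$ factorization property'' recorded as \cite[Theorem 2]{bbcw}. The extra Kac-type verification and the alternative inductive route through \cite{bw} are sound but not needed---the paper dispatches the corollary in one sentence by citing \cite[Theorem 2]{bbcw}.
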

\begin{proof}
  This is again a consequence of \Cref{pr.gen-rec}: by \cite[Theorem 2]{bbcw} residual finiteness implies the Kirchberg factorization property.
\end{proof}

\begin{remark}\label{re.rfd-f-cep}
  The proof of \Cref{cor.fact} also sheds some light on that of \Cref{cor.hyp}: as observed in \cite[Remark 2.11]{bw}, the factorization property implies hyperlinearity. Together with \cite[Theorem 2]{bbcw} this justifies the claim made in the proof of \Cref{cor.hyp} that residual finiteness does too, and shows that the three properties discussed above are ordered by strength as follows:
  \begin{equation*}
    \text{residual finiteness }\Rightarrow\text{ factorization property }\Rightarrow\text{ hyperlinearity}. 
  \end{equation*}  
\end{remark}

\section{An application to generation by tori}\label{se.appl}


Assuming \Cref{pr.gen-rec}, we propose to address the following problem. Denote
\begin{equation*}
  T^+_N=\widehat{\bZ_2^{*N}},\quad \bT^+_N=\widehat{\bZ^{*N}}
\end{equation*}

Then, we have

\begin{theorem}\label{th.u}
  For all $N\ge 2$,
  \begin{equation*}
    U^+_N = \langle U_N, \bT^+_N\rangle.
  \end{equation*}
\end{theorem}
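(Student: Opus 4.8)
The plan is to deduce \Cref{th.u} from \Cref{pr.gen-rec} by an induction on $N$, using the fact that $\bT^+_N$ sits inside $U^+_N$ as the diagonal torus and is conjugated around by the classical subgroup $U_N$. The base case $N=2$ must be handled separately; the inductive step is where \Cref{pr.gen-rec} enters.

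First I would set up the base case. For $N=2$ one wants $U^+_2=\langle U_2,\bT^+_2\rangle$. The standard way to see this is to observe that $U_2$ together with the diagonal torus $\bT^+_2$ generates — inside any CQG algebra quotient through which both $\cO(U_2)$ and $\cO(\bT^+_2)$ factor — enough to recover all the generators $u_{ij}$ of $\cO(U^+_2)$ with the full freeness relations. Concretely, conjugating the diagonal torus by a generic element of $U_2$ produces off-diagonal matrix entries, and because the torus generators are only required to be unitary (no commutation imposed), the subgroup they generate with $U_2$ cannot satisfy any relation not already holding in $\cO(U^+_2)$. Alternatively — and this is probably cleaner to cite — this $N=2$ statement is already implicitly in \cite{chi-rfd} or follows from the representation-theoretic description of $U^+_2$-intertwiners by non-crossing pairings, exactly as in the proof of \Cref{pr.gen-rec}: one checks that a map $f\colon V^{(\varepsilon_i)}\to\bC$ that is an intertwiner over both $U_2$ and $\bT^+_2$ is a span of non-crossing pairings.

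For the inductive step, assume $U^+_{N-1}=\langle U_{N-1},\bT^+_{N-1}\rangle$ and use \Cref{pr.gen-rec} to write $U^+_N=\langle U_N, U^+_{N-1}\rangle$. I would argue that topological generation is transitive in the appropriate sense: if $U^+_N=\langle U_N, U^+_{N-1}\rangle$ and $U^+_{N-1}=\langle U_{N-1},\bT^+_{N-1}\rangle$, then since $U_{N-1}\le U_N$ and $\bT^+_{N-1}\le\bT^+_N$ (as quantum subgroups of $U^+_N$, after choosing the embedding $U^+_{N-1}\hookrightarrow U^+_N$ in the upper-left corner and noting the diagonal torus of $U^+_{N-1}$ is a sub-torus of that of $U^+_N$), one gets $U^+_N=\langle U_N,\bT^+_{N-1}\rangle\le\langle U_N,\bT^+_N\rangle$, hence equality. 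The transitivity claim itself should be phrased at the level of quotient Hopf $*$-algebras: any proper quotient of $\cO(U^+_N)$ through which both $\cO(U_N)$ and $\cO(\bT^+_N)$ factor would, because $\cO(U^+_{N-1})$ is topologically generated inside it by $\cO(U_{N-1})$ and $\cO(\bT^+_{N-1})$, also fail to be a quotient through which $\cO(U^+_{N-1})$ factors unless it is already all of $\cO(U^+_N)$ — contradicting \Cref{pr.gen-rec}.

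The main obstacle I expect is the careful bookkeeping of which quantum subgroup of $U^+_N$ one means by ``$\bT^+_{N-1}$'' and ``$U^+_{N-1}$'', and verifying compatibility of the two embeddings so that the transitivity argument is legitimate: one needs the diagonal torus of the copy of $U^+_{N-1}$ used in \Cref{pr.gen-rec} to literally be a sub-torus of $\bT^+_N\le U^+_N$, and $U_{N-1}$ to literally be a classical subgroup of $U_N$ — both of which hold with the standard upper-left-corner embeddings, but deserve an explicit sentence. Beyond that, the argument is a formal consequence of \Cref{pr.gen-rec}, the definition of topological generation (\Cref{def.tg}), and the easy observation that topological generation only gets stronger when one enlarges the generating subgroups.
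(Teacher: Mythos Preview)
Your inductive step ($N\ge 3$) is correct and matches the paper's argument exactly: apply \Cref{pr.gen-rec}, then the induction hypothesis, then absorb $U_{N-1}\le U_N$ and $\bT^+_{N-1}\le \bT^+_N$. The transitivity and embedding-compatibility remarks you add are fine but the paper treats these as obvious.

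The base case $N=2$, however, is a genuine gap in your proposal. Neither of your two suggested approaches is a proof. The first (``conjugating the diagonal torus by a generic element of $U_2$ produces off-diagonal entries, and the subgroup they generate cannot satisfy any relation not already holding in $\cO(U^+_2)$'') is simply a restatement of what is to be shown; nothing in that sentence rules out a proper intermediate quantum group. The second (check directly that any $U_2$- and $\bT^+_2$-intertwiner $V^{(\varepsilon_i)}\to\bC$ is a span of non-crossing pairings) is a legitimate strategy in principle, but you have not carried it out, and it is not obvious: the $\bT^+_2$-intertwiner condition tells you something about restrictions to the two coordinate lines, and one would need an argument analogous to the proof of \Cref{pr.gen-rec} to combine this with $U_2$-equivariance. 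You also cannot cite \cite{chi-rfd} for this, since the $N=2$ case there is handled by an entirely different route (residual finiteness of $\widehat{U^+_2}$ via free products, not torus generation).

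The paper's base case is quite different and uses two external ingredients. First, \cite[Lemme 7]{ban-un} gives a surjection $\bT\,\hat *\, SU_2\to U^+_2$ of compact quantum groups. Second, \Cref{th.tg} (proved later in the paper for arbitrary compact connected Lie $G$) shows that $\bT\,\hat *\, G=\langle \bT\,\hat *\, T,\ G\rangle$ for any maximal torus $T\le G$; applied to $G=SU_2$ this gives $\bT\,\hat *\, SU_2=\langle \bT\,\hat *\, \bT,\ SU_2\rangle$. Pushing this generation statement through the surjection, the images of $\bT\,\hat *\, \bT$ and $SU_2$ in $U^+_2$ are $\bT^+_2$ and $SU_2\le U_2$ respectively, yielding $U^+_2=\langle \bT^+_2,\ U_2\rangle$.
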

\begin{proof}
  We do this by induction on $N$.
  
  {\bf Induction step: $N\ge 3$.} We know from \Cref{pr.gen-rec} that
  \begin{equation*}
    U^+_N=\langle U_N,U^+_{N-1}\rangle
  \end{equation*}
  and by the induction hypothesis
  \begin{equation*}
    U^+_{N-1} = \langle U_{N-1},\bT^+_{N-1}\rangle. 
  \end{equation*}
The conclusion now follows from $\bT^+_{N-1}<\bT^+_N$. 

{\bf Base case: $N=2$.} According to \cite[Lemme 7]{ban-un} we have a surjection
\begin{equation}\label{eq:4}
  \bT\,\hat *\, SU_2\to U^+_2
\end{equation}
(in the sense that the opposite morphism of Hopf algebras is an inclusion). Since the left hand side is generated by $\bT\,\hat *\,\bT$ and $SU_2$ (by \Cref{th.tg} applied to $G=SU_2$), the right hand side will be generated by the images $\bT^+_2$ and $SU_2\subset U_2$ of these two quantum groups through \Cref{eq:4}.
\end{proof}

\begin{corollary}\label{cor.u}
  For all $N\ge 2$,
  \begin{equation*}
    U^+_N = \langle O_N, \bT^+_N\rangle.
  \end{equation*}  
\end{corollary}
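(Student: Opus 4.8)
The plan is to bootstrap from \Cref{th.u}, which already gives $U^+_N=\langle U_N,\bT^+_N\rangle$, and to upgrade it by showing that the classical unitary group $U_N$ is itself topologically generated inside $U^+_N$ by the orthogonal subgroup $O_N$ together with the (larger) free torus $\bT^+_N$. The starting observation is that $\bT^+_N=\widehat{\bZ^{*N}}$ has the ordinary torus $\bT_N=\widehat{\bZ^N}$ as a quantum subgroup, via the abelianization $\bZ^{*N}\twoheadrightarrow\bZ^N$, and that this $\bT_N$ is precisely the diagonal maximal torus of $U_N\le U^+_N$; all of these quantum-subgroup relations fit together compatibly inside $U^+_N$ (the two composites $\bT_N\le U^+_N$ agree, both sending $u_{ij}\mapsto\delta_{ij}z_i$). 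Consequently $\langle O_N,\bT^+_N\rangle\supseteq\langle O_N,\bT_N\rangle$ as quantum subgroups of $U^+_N$.

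The second ingredient is the classical fact that $U_N=\langle O_N,\bT_N\rangle$, which I would establish at the Lie-algebra level. Since $\cO(U_N)$ is commutative, every quantum subgroup of $U_N$ is an honest closed subgroup; the quantum subgroup of $U^+_N$ generated by $O_N$ and $\bT_N$ is forced to lie inside $U_N$, so it coincides with the closed subgroup $H\le U_N$ classically generated by $O_N$ and $\bT_N$. Its Lie algebra contains $\fo_N$ (real antisymmetric matrices) and $\ft_N$ (purely imaginary diagonal matrices), and the bracket identity $[\,ie_{jj},\,e_{jk}-e_{kj}\,]=i(e_{jk}+e_{kj})$ for $j\ne k$ shows that closing under brackets produces all of $i\cdot(\text{real symmetric matrices})$ as well; hence $\mathrm{Lie}(H)=\fu_N$, and connectedness of $U_N$ forces $H=U_N$. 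This is uniform in $N\ge 2$ (for $N=2$ the four listed types of elements already span the four-dimensional $\fu_2$).

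Putting the pieces together: $\langle O_N,\bT^+_N\rangle$ contains $\langle O_N,\bT_N\rangle=U_N$ and contains $\bT^+_N$, hence contains $\langle U_N,\bT^+_N\rangle$, which is $U^+_N$ by \Cref{th.u}; therefore $\langle O_N,\bT^+_N\rangle=U^+_N$. I do not anticipate a real obstacle: all the substance already lives in \Cref{th.u} (and behind it, \Cref{pr.gen-rec}), and what remains is the elementary classical generation statement plus the routine bookkeeping that the ``generated subgroup'' operation is transitive and that a quantum subgroup of $U^+_N$ assembled from classical pieces lying in $U_N$ is again classical. The one point deserving a moment's care is the identification of the quantum-group-theoretic $\langle O_N,\bT_N\rangle$ with its classical analogue, but this is immediate from the commutativity of $\cO(U_N)$.
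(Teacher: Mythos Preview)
Your proposal is correct and follows exactly the route the paper takes: invoke \Cref{th.u}, note that $\bT_N<\bT^+_N$, and use the classical fact $U_N=\langle O_N,\bT_N\rangle$. The paper compresses this into a single sentence, while you have spelled out the Lie-algebra verification of the classical generation and the bookkeeping about generated quantum subgroups; the substance is identical.
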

\begin{proof}
  This follows from \Cref{th.u} and the fact that $U_N$ is topologically generated by $O_N$ and $\bT_N<\bT^+_N$. 
\end{proof}

As a consequence, we have the analogous orthogonal result:

\begin{theorem}\label{th.o}
  For all $N\ge 2$,
  \begin{equation}\label{eq:5}
    O^+_N = \langle O_N, T^+_N\rangle.
  \end{equation}  
\end{theorem}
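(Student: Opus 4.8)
The plan is to run the proof of \Cref{th.u} over again with $U^+$ replaced by $O^+$ throughout: an induction on $N$, whose inductive step is fed by an orthogonal avatar of \Cref{pr.gen-rec}, and whose base case $N=2$ is supplied by a structural description of $O^+_2$ in the spirit of the Banica surjection $\bT\,\hat*\,SU_2\twoheadrightarrow U^+_2$. It is worth stressing that \Cref{th.o} does \emph{not} follow from \Cref{th.u} merely by pushing forward along the self-adjointification quotient $\cO(U^+_N)\twoheadrightarrow\cO(O^+_N)$: topological generation is not inherited by quotient quantum groups, since $\bigcap_i\ker(\cO(G)\to\cO(G_i))=0$ does not force $\bigcap_i\bigl(K+\ker(\cO(G)\to\cO(G_i))\bigr)=K$ for a Hopf ideal $K$ (for $U^+_2\twoheadrightarrow O^+_2$ the latter identity is in fact equivalent to the $N=2$ case of \Cref{th.o} itself).

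The first task is the orthogonal analogue of \Cref{pr.gen-rec}: for all $N\ge 3$,
\begin{equation*}
  O^+_N=\langle O_N,\ O^+_{N-1}\rangle.
\end{equation*}
This is obtained by transcribing the proof of \Cref{pr.gen-rec} almost verbatim, with the simplification that the fundamental representation $V$ of $O^+_N$ is self-dual, so that by \cite{ban-un} every intertwiner $V^{\otimes k}\to\bC$ is a linear combination of non-crossing pairings of the $k$ tensorands. Given $f\colon V^{\otimes k}\to\bC$ intertwining both $O_N$ and $O^+_{N-1}$, one splits $V=W\oplus U$ with $\dim W=N-1$ and $\dim U=1$, subtracts non-crossing pairings so that $f|_{W^{\otimes k}}=0$, uses $O_N$-invariance to move $U$ into arbitrary position (in particular into the non-complementary case $U\subseteq W$) while the restriction of $f$ to the $W$-blocks stays governed by a single fixed combination of non-crossing pairings on $W$ (the insertion map of \Cref{ex.insert}), and concludes that this combination vanishes identically. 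As in \Cref{pr.gen-rec}, the only $N$-sensitive input is the linear independence of non-crossing pairings on an $(N-1)$-dimensional space, valid once $N-1\ge 2$ — equivalently, faithfulness of the Temperley--Lieb algebra $TL_k(N-1)$ acting on $(\bC^{N-1})^{\otimes k}$, which at $N-1=2$ is Schur--Weyl duality for $SU_2$. Granting this, the induction step for $N\ge 3$ is verbatim that of \Cref{th.u}: combine the displayed identity with the inductive hypothesis $O^+_{N-1}=\langle O_{N-1},\ T^+_{N-1}\rangle$, the inclusion $O_{N-1}<O_N$, and $T^+_{N-1}<T^+_N$ (dual to the surjection $\bZ_2^{*N}\twoheadrightarrow\bZ_2^{*(N-1)}$ collapsing one generator).

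The base case $N=2$ is where essentially all the work beyond transcription lies, and is the main obstacle. One must prove $O^+_2=\langle O_2,\ T^+_2\rangle$ with $T^+_2=\widehat{\bZ_2*\bZ_2}=\widehat{D_\infty}$; none of the inductive machinery applies, and this step plays the role that $\bT\,\hat*\,SU_2\twoheadrightarrow U^+_2$ plays at $N=2$ in \Cref{th.u}. The preferred route is to exhibit a surjection onto $O^+_2$ from a free product $\Lambda\,\hat*\,SU_2$ with $\Lambda$ a small classical group (a $\bZ_2$, or a finite extension of it) carrying suitable quantum subgroups onto $T^+_2$ and into $O_2$ respectively; topological generation of $\Lambda\,\hat*\,SU_2$ by those subgroups — via \Cref{th.tg} applied to $SU_2$, exactly as in the unitary base case — would then transfer to the quotient $O^+_2$. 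Such a presentation should be extractable from the structure theory of $O^+_2$ (monoidally equivalent to $SU_2$, with maximal torus $\widehat{D_\infty}$) and from the torus-generation analysis of \cite{ban-tor}. Failing a ready-made presentation, the fallback is a direct Temperley--Lieb computation: show that any linear map $V^{\otimes k}\to\bC$ intertwining the classical $O_2$ and the torus $T^+_2=\widehat{D_\infty}$ must be a span of non-crossing pairings, by combining the Brauer description of $O(2)$-invariant tensors in $V^{\otimes k}$ (all pairings, modulo the single determinantal relation forced in dimension $2$) with the constraint that $\widehat{D_\infty}$-invariance imposes on the supporting multi-indices. Once this base case is secured, the remainder is a transcription of the unitary material of \Cref{se.appl}.
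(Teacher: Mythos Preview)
Your approach is genuinely different from the paper's, and as written it has a gap.

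The paper does \emph{not} rerun the induction in the orthogonal category. Instead it deduces \Cref{th.o} from \Cref{cor.u} (hence ultimately from \Cref{th.u}) via the projectivization isomorphism $PO^+_N\cong PU^+_N$: projectivizing \Cref{cor.u} gives $PO^+_N=\langle PO_N, PT^+_N\rangle$, and then a short-exact-sequence argument with the central $\bZ_2\le O^+_N$ lifts this to $O^+_N=\langle O_N,T^+_N\rangle$. This works uniformly for all $N\ge 2$ and requires no separate base case. So your opening remark that \Cref{th.o} ``does not follow from \Cref{th.u}'' is correct only for the naive push-forward along $\cO(U^+_N)\twoheadrightarrow\cO(O^+_N)$; the paper shows that it \emph{does} follow once one routes through $PO^+_N\cong PU^+_N$.

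Your inductive step---the orthogonal analogue $O^+_N=\langle O_N,\,O^+_{N-1}\rangle$ of \Cref{pr.gen-rec}---is fine: the transcription goes through as you describe, and the linear-independence input at $N=3$ is exactly Temperley--Lieb faithfulness at loop parameter $2$. The gap is the base case $N=2$, which you yourself flag as ``the main obstacle'' and then do not prove. Neither proposed route is carried out. There is no off-the-shelf surjection $\Lambda\,\hat*\,SU_2\twoheadrightarrow O^+_2$ analogous to Banica's $\bT\,\hat*\,SU_2\twoheadrightarrow U^+_2$; the standard structural fact is that $O^+_2$ is \emph{monoidally equivalent} to $SU_2$, which is a different kind of statement and does not by itself produce a free-product cover with the required subgroup images. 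And the ``direct Temperley--Lieb computation'' is only named, not performed. As it stands, then, the proof is incomplete at $N=2$, and the paper's projectivization device is precisely what circumvents this missing base case.
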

\begin{proof}
  First, consider the projectivization $PO^+_N$ whose underlying $*$-algebra is generated by
  \begin{equation*}
    u^*_{ij}u_{kl} = u_{ij}u_{kl}
  \end{equation*}
  and the unitary analogue $PU^+_N$. The inclusion $O^+_N\to U^+_N$ induces an isomorphism
  \begin{equation*}
    PO^+_N\cong PU^+_N
  \end{equation*}
  (e.g. by \cite[Th\'eor\`eme 1]{ban-un} or \cite[Proposition 3.3]{chi-rfd}). Since \Cref{cor.u} ensures that
  \begin{equation}\label{eq:6}
    PO^+_N=PU^+_N=\langle PO_N,P\bT^+_N\rangle  = \langle PO_N,PT^+_N\rangle,
  \end{equation}
we at least know that \Cref{eq:5} holds ``projectively''. 

Now let
\begin{equation*}
  G=\langle O_N,T^+_N\rangle \le O^+_N. 
\end{equation*}
Since the center $\bZ_2<O^+_N$ is contained in $O_N$ and $T^+_N$, we have a commutative diagram
\begin{equation*}
 \begin{tikzpicture}[auto,baseline=(current  bounding  box.center)]
  \path[anchor=base] (0,0) node (1) {$\bC$} +(2,.5) node (lu) {$\cO(PO^+_N)$} +(4,.5) node (mu) {$\cO(O^+_N)$} +(6,0) node (r) {$\cO(\bZ_2)$} +(8,0) node (2) {$\bC$} +(2,-.5) node (ld) {$\cO(H)$} +(4,-.5) node (md) {$\cO(G)$};
  \draw[->] (1) to[bend left=6] node[pos=.5,auto] {$\scriptstyle $} (lu);
  \draw[->] (1) to[bend right=6] node[pos=.5,auto] {$\scriptstyle $} (ld);
  \draw[->] (lu) to[bend left=0] node[pos=.5,auto] {$\scriptstyle $} (mu);
  \draw[->] (ld) to[bend left=0] node[pos=.5,auto] {$\scriptstyle $} (md);
  \draw[->] (mu) to[bend left=6] node[pos=.5,auto] {$\scriptstyle $} (r);
  \draw[->] (md) to[bend right=6] node[pos=.5,auto] {$\scriptstyle $} (r);
  \draw[->] (r) to[bend right=0] node[pos=.5,auto] {$\scriptstyle $} (2);
  \draw[->] (lu) to[bend right=0] node[pos=.5,auto] {$\scriptstyle $} (ld);
  \draw[->] (mu) to[bend right=0] node[pos=.5,auto] {$\scriptstyle $} (md);
 \end{tikzpicture}
\end{equation*}
of Hopf algebras with exact rows, surjective columns, and
\begin{equation*}
  H = \langle PO_N,PT^+_N\rangle \le PO^+_N. 
\end{equation*}
Because $H=PO^+_N$ by \Cref{eq:6} the left hand vertical arrow is an isomorphism, and hence so is the right hand vertical arrow. But this means precisely that we have \Cref{eq:5}, as desired.
\end{proof}

\subsection{Dual free products by the circle}\label{subse.g}

The following result, used above in the proof of \Cref{th.u}, might be of some independent interest. 

\begin{theorem}\label{th.tg}
  Let $G$ be a compact connected Lie group and $T\le G$ a maximal torus. We then have
  \begin{equation*}
    \bT\,\hat *\, G = \langle \bT\,\hat *\, T,G\rangle.
  \end{equation*}
\end{theorem}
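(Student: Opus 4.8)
The plan is to check the defining condition of \Cref{def.tg} through the ``jointly full'' reformulation from \cite{chi-rfd} recalled in the introduction: writing $z$ for the grouplike unitary generating $\cO(\bT)=\bC\bZ$, it suffices to show that a linear map between two $\bT\hat* G$-representations which is simultaneously a $(\bT\hat* T)$-morphism and a $G$-morphism is automatically a $(\bT\hat* G)$-morphism. By the usual reductions — replacing a pair $(V,W)$ by $V\oplus W$ and, using semisimplicity, passing to direct summands — this comes down to the equality
\begin{equation*}
  \End_{\bT\hat* G}(V)=\End_{\bT\hat* T}(V)\cap\End_G(V)\qquad\text{inside }\End_\bC(V)
\end{equation*}
for $V$ ranging over a tensor-generating family of representations of $\bT\hat* G$; the inclusion ``$\subseteq$'' is automatic from $\bT\hat* T\le\bT\hat* G$ and $G\le\bT\hat* G$, so only the reverse inclusion needs an argument. (Note that $\bT$ and $G$ \emph{alone} only generate a quantum subgroup contained in the direct product $\bT\times G\subsetneq\bT\hat* G$, so the strengthening of $\bT$ to $\bT\hat* T$ is doing real work.) Since $\bT\hat* G$ is generated by its free factors $\bT$ and $G$, one may take the generating family to consist of the tensor words
\begin{equation*}
  V=z^{n_0}\otimes\sigma_0\otimes z^{n_1}\otimes\sigma_1\otimes\cdots\otimes\sigma_{k-1}\otimes z^{n_k},\qquad n_i\in\bZ,\ \sigma_i\in\cM^{\cO(G)}.
\end{equation*}

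For such $V$ the three endomorphism algebras are explicit. On the $\bT\hat* G$ side this is the standard description of the representation category of a dual free product: the irreducibles are the reduced alternating words, and a morphism between two tensor words is built freely from morphisms of $\cM^{\cO(\bT)}$ and of $\cM^{\cO(G)}$ placed in non-crossing position — in particular it can only match equal powers of $z$ (never permute them) and must act on the $G$-coloured strands through honest $\cM^{\cO(G)}$-morphisms. Restricting $V$ to $\bT\hat* T$, I would decompose each $\sigma_i$ into its $T$-weight spaces; then $V|_{\bT\hat* T}$ becomes a direct sum of (one-dimensional) words in powers of $z$ and characters of $T$ — with any two adjacent characters separated by $z^0$ getting multiplied — and $\End_{\bT\hat* T}(V)$ is the corresponding block-diagonal algebra. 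Over $G$ the $z$'s restrict to the trivial representation, so $\End_G(V)$ is canonically $\End_G(\sigma_0\otimes\cdots\otimes\sigma_{k-1})$, sitting inside $\End_\bC(V)$ via the obvious identifications.

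The crux is then to show that an operator lying in both $\End_{\bT\hat* T}(V)$ and $\End_G(V)$ satisfies the free-product constraints that cut out $\End_{\bT\hat* G}(V)$: membership in $\End_G(V)$ forces the ``$G$-coloured part'' of the operator to be a genuine $\cM^{\cO(G)}$-morphism rather than a mere $T$-intertwiner, membership in $\End_{\bT\hat* T}(V)$ forces the powers of $z$ to be respected slot by slot, and together this is precisely membership in $\End_{\bT\hat* G}(V)$. I expect this to be cleanest as an induction on the word length $k$, peeling off the tail $\sigma_{k-1}\otimes z^{n_k}$ at each step and using the rigidity of $\cM^{\cO(G)}$ to ``close up'' the last $G$-strand. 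The one genuinely delicate point — and the place where connectedness of $G$ and maximality of $T$ are used — is the bookkeeping of $T$-weight multiplicities inside the $\sigma_i$: one needs the $T$-weight decompositions, together with the $G$-module structure, to separate the $\bT\hat* G$-irreducible constituents of $V$ finely enough that the block structure of $\End_{\bT\hat* T}(V)$ meets $\End_G(V)$ in exactly $\End_{\bT\hat* G}(V)$. For the application in \Cref{th.u} only $G=SU_2$ is needed, where $\sigma|_T$ is multiplicity-free for every irreducible $\sigma$ and this bookkeeping is transparent; carrying it through for a general connected $G$ is where I expect the main effort to lie.
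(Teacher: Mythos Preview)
Your categorical route via the jointly-full criterion is genuinely different from the paper's argument, but as written it is a plan rather than a proof: you yourself flag that the general-$G$ case with nontrivial $T$-weight multiplicities is ``where the main effort lies'' and leave it undone, and the proposed induction on word length (``peeling off the tail $\sigma_{k-1}\otimes z^{n_k}$ and closing up the last $G$-strand'') is not spelled out enough to check. So there is a real gap outside of $G=SU_2$.

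The paper sidesteps all weight-multiplicity bookkeeping by a trick you do not exploit. Since $G$ is one of the generators and contains every maximal torus $T_i$, the intermediate quantum subgroup already contains $\bT\hat*T_i$ for \emph{every} maximal torus $T_i\le G$, not just the fixed one. One then picks finitely many $T_1,\dots,T_k$ whose product $T_1\times\cdots\times T_k\to G$ is onto (this is where connectedness of $G$ enters), so that the iterated comultiplication embeds $\cO(G)$ into $\bigotimes_i\cO(T_i)$; free-producting with $\cO(\bT)$ preserves this injectivity, and the resulting embedding of $\cO(\bT)*\cO(G)$ factors through the intermediate Hopf quotient $H$, forcing $H=\cO(\bT)*\cO(G)$. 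No intertwiner computations, no case analysis on $\sigma_i$.

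If you want to rescue your framework, the same idea can be imported: rather than working with a single $T$, intersect $\End_G(V)$ with $\End_{\bT\hat*T_i}(V)$ over enough conjugate tori $T_i$. Passing to several tori is exactly what lets you distinguish genuine $G$-morphisms from operators that merely respect one weight decomposition, and it removes the multiplicity issue you were worried about.
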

\begin{proof}
  We have to argue that the Hopf algebra surjections
  \begin{equation*}
    \cO(\bT)*\cO(G)\to \cO(\bT)*\cO(T),\quad  \cO(\bT)*\cO(G)\to \cO(G)
  \end{equation*}
  do not factor through any proper Hopf quotient of $\cO(\bT)*\cO(G)$. To that end, let
  \begin{equation*}
    \cO(\bT)\to \cO(G)\to H
  \end{equation*}
  be the smallest Hopf quotient factoring the two maps. Since all maximal tori of $G$ are mutually conjugate, we have factorizations
  \begin{equation*}
    \cO(\bT)*\cO(G)\to H\to \cO(\bT)*\cO(T_i)
  \end{equation*}
  for every maximal torus $T_i\le G$.


  Consider a finite set of maximal tori $T_i\le G$, $1\le i\le k$ such that the product
  \begin{equation*}
    T_1\times\cdots\times T_k\to G
  \end{equation*}
  is onto. At the level of function algebras, this means that the iterated coproduct
  \begin{equation}\label{eq:9}
    \Delta^{(k-1)}:\cO(G)\to \cO(T_1)\otimes\cdots\otimes \cO(T_k)
  \end{equation}
  is an embedding. 

  The analogous iterated coproduct
  \begin{equation}\label{eq:8}
    \Delta^{(k-1)}:\cO(\bT)*\cO(G)\to (\cO(\bT)*\cO(T_1))\otimes\cdots\otimes (\cO(\bT)*\cO(T_k))
  \end{equation}
  lands inside the algebra $\cA$ generated by the diagonal subalgebra
  \begin{equation*}
    \cO(\bT)\subset \cO(\bT)^{\otimes k}
  \end{equation*}
  and
  \begin{equation*}
    \cO(T_1)\otimes\cdots\otimes \cO(T_k). 
  \end{equation*}
  Now note that
  \begin{equation*}
    \cA\cong \cO(\bT)*(\cO(T_1)\otimes\cdots\otimes \cO(T_k))
  \end{equation*}
  and hence \Cref{eq:8} must be one-to-one because \Cref{eq:9} is. Since on the other hand \Cref{eq:8} factors through the quotient $\cO(\bT)*\cO(G)\to H$, it follows, as desired, that this quotient is the identity.
\end{proof}


\bibliographystyle{plain}
\addcontentsline{toc}{section}{References}

\def\polhk#1{\setbox0=\hbox{#1}{\ooalign{\hidewidth
  \lower1.5ex\hbox{`}\hidewidth\crcr\unhbox0}}}

\Addresses

\end{document}